\newtheorem{theorem}{Theorem}[section]
\newtheorem{lemma}[theorem]{Lemma}
\newtheorem{corollary}[theorem]{Corollary}
\newtheorem{conjecture}[theorem]{Conjecture}
\newcommand{\bF}{\mathbb F}
\newcommand{\bN}{\mathbb N}
\newcommand{\bZ}{\mathbb Z}
\newcommand{\cS}{\mathcal{S}}
\newcommand{\cP}{\mathcal{P}}
\newcommand{\cL}{\mathcal{L}}
\newcommand{\cR}{\mathcal{R}}
\newcommand{\cF}{\mathcal{F}}
\newcommand{\del}{\setminus}
\newcommand{\con}{/}
\DeclareMathOperator{\tower}{tower}
\DeclareMathOperator{\si}{si}
\DeclareMathOperator{\cl}{cl}
\DeclareMathOperator{\PG}{PG}
\DeclareMathOperator{\Span}{span}
\begin{document}

\sloppy

\title[On Rota's Conjecture]
{On Rota's conjecture and nested separations in matroids}

\author{Shalev Ben-David}
\address{Computer Science and Artificial Intelligence Laboratory,
MIT, Cambridge, Massachusetts, USA.}
\author{Jim Geelen}
\address{Department of Combinatorics and Optimization,
University of Waterloo, Waterloo, Canada}
% \email{jfgeelen@math.uwaterloo.ca}
\thanks{ This research was partially supported by an
NSERC Discovery Grant held by Jim Geelen and by an
NSERC Undergraduate Student Research Award held by
Shalev Ben-David at the University of Waterloo in 2009}

\subjclass{05B35}
\keywords{matroids, representation}
\date{\today}

\begin{abstract}
We prove that for each finite field $\bF$ and integer $k\in \bZ$
there exists $n\in \bZ$ such that no excluded minor for the 
class of $\bF$-representable matroids has $n$ nested 
$k$-separations.
\end{abstract}

\maketitle

\section{Introduction}
We prove a partial result towards
Rota's Conjecture~[\ref{rota}].
\begin{conjecture}[Rota]
For each finite field $\bF$,
there are, up to isomorphism, 
only finitely many excluded minors for
the class of $\bF$-representable matroids.
\end{conjecture}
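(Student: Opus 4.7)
The plan is to fit Rota's conjecture into a Robertson-Seymour-style framework with two pillars. The \emph{first pillar} is a well-quasi-ordering (WQO) theorem: for any fixed finite field $\bF$ and integer $w$, the class of $\bF$-representable matroids of branch-width at most $w$ is well-quasi-ordered under the minor relation. The \emph{second pillar} is a branch-width bound on excluded minors, asserting that every excluded minor for the class of $\bF$-representable matroids has branch-width at most some constant $w_0(\bF)$. Together these imply the conjecture, since the excluded minors form an antichain in the minor order, and antichains in well-quasi-orders are finite.

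For the first pillar, I would use a Kruskal-style tree-labelling argument. A matroid of branch-width at most $w$ admits a branch-decomposition whose attachment data across each edge lies in a finite set of subspaces of an $\bF$-vector space of bounded dimension, so the decomposition can be encoded as a finitely-labelled tree; Kruskal's tree theorem (suitably refined) then yields WQO. A subtlety is that excluded minors are themselves not $\bF$-representable, but they are minor-minimally non-representable, so one works with the WQO on their proper minors together with a gluing argument across low-order separations to control how the missing element is attached.

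For the second pillar, I argue by contradiction: let $M$ be an excluded minor of very high branch-width. Applying a matroid analogue of the Robertson-Seymour grid theorem, one locates inside $M$ a large unavoidable substructure. The dichotomy gives either a large projective geometry minor $\PG(n,q')$, or, for some bounded $k$, a long chain of nested $k$-separations. The first alternative is ruled out by growth-rate and representability arguments (a large projective geometry inside $M$ forces $\bF$-representability of $M \del e$ and $M \con e$ to propagate to $M$ itself). The second alternative is precisely the obstruction ruled out by the theorem stated in this paper's abstract.

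The main obstacle is the matroid grid theorem: one must prove that unbounded branch-width in near-$\bF$-representable matroids forces one of a finite, controllable list of obstructions. This is the technical heart of the matroid minors programme, generalizing deep Robertson-Seymour results on graphs; the nested-separations theorem proved in this paper supplies one crucial input to the ``path-like'' case but does not by itself complete the branch-width bound, and substantial further structural analysis is needed to handle flower-like, spike-like, and swirl-like obstructions that have no graph analogues.
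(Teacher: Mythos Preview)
The statement you are addressing is labelled \emph{Conjecture} in the paper, not \emph{Theorem}: the paper does not prove Rota's Conjecture and contains no proof for you to be compared against. What the paper actually proves is Theorem~\ref{main} (no excluded minor has too many nested $k$-separations) and its consequence Corollary~\ref{bw} (an excluded minor of branch-width $k$ has at most $\tower(3,q,q+k,k+1,4)$ elements). Your proposal is therefore not a proof of anything in the paper; it is an outline of a programme, and you yourself concede that the decisive step is missing.

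Even as an outline, your two-pillar architecture has a structural flaw. Your first pillar is WQO for $\bF$-representable matroids of bounded branch-width, but the excluded minors are precisely \emph{not} $\bF$-representable, so that WQO theorem does not apply to them. The ``gluing argument across low-order separations'' you invoke to bridge this gap is not an argument; it is a placeholder. In fact the paper you are reading already supplies a clean replacement for your first pillar: Corollary~\ref{bw} gives an explicit \emph{size} bound on excluded minors of any fixed branch-width, so if your second pillar held (branch-width of excluded minors bounded by some $w_0(\bF)$), finiteness would follow immediately with no WQO needed at all.

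That leaves your second pillar, the branch-width bound, which you correctly identify as the crux. Your sketch there (grid theorem, then a dichotomy between large projective geometries and long chains of nested separations) is not how the argument actually goes: large branch-width in a matroid does not reduce to that simple dichotomy, and the ``flower-like, spike-like, and swirl-like obstructions'' you mention in passing are not side cases but the entire difficulty. The nested-separations theorem proved here handles only the path-like extreme; everything else is genuinely absent from your proposal, as you acknowledge in your final sentence. So what you have written is a plausible table of contents for a long research programme, not a proof.
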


A sequence $(A_1,B_1),\ldots,(A_n,B_n)$ of $k$-separations
in a matroid is said to be {\em nested} if
$A_1\subset A_2\subset \cdots\subset A_n$.
We prove the following theorem.
\begin{theorem}\label{main}
Let $\bF$ be a finite field of order $q$,
let $k$ be a positive integer, and let 
$n=\tower(q,q,k,6)$. Then
no excluded minor for the class of $\bF$-representable matroids
admits a sequence of $n$ nested $k$-separations.
\end{theorem}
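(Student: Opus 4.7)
Suppose for contradiction that $M$ is an excluded minor for the class of $\bF$-representable matroids and that $M$ admits nested $k$-separations $(A_1,B_1),\ldots,(A_n,B_n)$ with $n=\tower(q,q+k,k+1,4)$. The plan is to exhibit an $\bF$-representation of $M$, contradicting the excluded-minor hypothesis. The four-fold iterated tower in the bound strongly suggests four successive Ramsey-style refinements of the sequence of nested separations.

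Writing $P_i := A_{i+1}\del A_i$, each middle piece attaches to its two sides through connectors of rank at most $k-1$ (coming from the two flanking $k$-separations). The four refinements progressively constrain what these pieces look like: first, pass to a subsequence along which the cardinalities $|P_i|$ are constant; second, one along which the rank pattern of $P_i$ against its two attachments is constant (this has $O(k+1)$ possibilities); third, one along which the isomorphism type of the matroid $M|(P_i\cup\text{attachments})$ is constant; and fourth, one along which the local $\bF$-representation of each piece, as it appears in any $\bF$-representation of a proper minor of $M$, lies in a single equivalence class under change of basis. The four bases $4,k+1,q+k,q$ in the tower track these successive class counts. After the refinements a still-enormous subsequence remains along which all middle pieces are indistinguishable both matroidally and representation-theoretically.

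Now pick an element $e \in P_j$ for some middle index $j$, together with another middle index $j' \neq j$. Because $M$ is an excluded minor, both $M\del e$ and $M\con e$ are $\bF$-representable; fix representations $D$ and $D'$ of them. The uniformity of $P_j$ and $P_{j'}$ lets the latter serve as a template for the former: the representation $D$ of $M\del e$ determines, by transporting the structure at $P_{j'}$ to $P_j$, a unique candidate column for $e$, and dually $D'$ determines a unique candidate hyperplane through $e$. Together these produce a well-defined column which I claim represents $e$ inside a global $\bF$-representation of $M$.

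The main obstacle, and where the length of the nested chain is genuinely used, is verifying this claim: that the template-derived column for $e$ really is consistent with every dependency $e$ participates in inside $M$, not only those surviving in $M\del e$. Each of the many separations between $P_j$ and $P_{j'}$ has order at most $k$, so how $e$ attaches across any such separation is confined to a subspace of codimension at most $k-1$. The tower-sized length of the chain ensures that the intersection of these many constraints, together with the dual constraint coming from the $M\con e$ representation, isolates exactly one candidate column, necessarily the template's. Making this uniqueness argument rigorous, and then checking that the resulting matrix with the extra column for $e$ really represents $M$, is the crux of the proof.
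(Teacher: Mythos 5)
There is a genuine gap, and in fact two distinct problems. First, your Ramsey refinements are not available at the claimed cost: passing to a subsequence on which the cardinalities $|P_i|$ are constant, or on which the isomorphism type of $M|(P_i\cup\text{attachments})$ is constant, involves \emph{unboundedly} many classes (the middle pieces can be arbitrarily large), so no tower bound depending only on $q$ and $k$ can guarantee such a subsequence. The paper never classifies the pieces themselves; it classifies only their \emph{interaction across the cuts}, which is bounded data: the partition $\cP(M,A)$ of subsets of one side into equivalence classes determined by $\sqcap_M(\cdot,Z)$ for $Z$ on the other side (Lemma~\ref{classes1} bounds its size via flats of a rank-$(k-1)$ projective geometry, Lemma~\ref{flats}), and the sets of realizable $\bF$-schemes at each cut. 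Second, and more seriously, your endgame is to build an $\bF$-representation of $M$ by merging representations of $M\del e$ and $M\con e$ and arguing that the constraints ``isolate exactly one candidate column'' for $e$. Uniqueness of that column is false in general --- the existence of many inequivalent representations across a $k$-separation is precisely the obstacle the paper's scheme machinery is designed to manage --- and you explicitly leave this crux unproved, so the proposal does not contain a proof.

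The paper's contradiction runs in the opposite direction and avoids constructing a representation of $M$ altogether. After extracting a \emph{linked} $k$-dissection (Lemma~\ref{linked}) and using Tutte's Linking Theorem plus Lemmas~\ref{technical1}/\ref{technical2} to make the equivalence-class data uniform along the chain, it attaches to each cut position $i$ the pair $(\cS_L(i),\cS_R(i))$ of sets of \emph{realizable schemes}; since $M$ is not $\bF$-representable, Lemma~\ref{majic} says no compatible pair exists at any position. The number of possible pairs is bounded by a tower in $q$ and $k$, so by pigeonhole two positions $i<j$ carry identical scheme data, and then the proper minor $M\circ A[i,j-1]$ inherits the absence of a compatible pair, hence is not $\bF$-representable --- contradicting the fact that every proper minor of an excluded minor is representable. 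If you want to salvage your outline, you would need to replace ``constant size/isomorphism type of the pieces'' by the bounded cross-cut invariants above, and replace the column-gluing step by an argument that contradicts minor-minimality rather than non-representability of $M$ itself.
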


Here
$\tower(a_1,a_2,\ldots,a_n)=
a_1^{ a_2^{\cdot^{\cdot^{\cdot^{a_n}}}}}.$

The special case of this result with $k=3$ was proved by 
Oxley, Vertigan, and Whittle (personal communication).

We conclude the introduction with an application
of Theorem~\ref{main} to branch-width;
this corollary is proved in Section~\ref{sec:bw}
where we also define branch-width.
\begin{corollary}\label{bw}
For any finite field $\bF$  of order $q$ and positive integer $k$,
if $M$ is an excluded minor for
the class of $\bF$-representable matroids and $M$ has branch-width
$k$, then $|M|\le \tower(3,q,q,k,6)$.
\end{corollary}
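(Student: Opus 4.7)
The plan is to couple Theorem~\ref{main} with the observation that in a branch-decomposition tree, any sufficiently long path traces out a nested family of $k$-separations, so bounded branch-width together with the ``no nested $k$-separations'' conclusion of Theorem~\ref{main} forces the decomposition tree, and hence the ground set, to be small.

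Set $n=\tower(q,q+k,k+1,4)$, so that by Theorem~\ref{main} the excluded minor $M$ admits no nested sequence of $n$ $k$-separations. The case $|M|\le 1$ is immediate from the definition, so assume $|M|\ge 2$ and fix a branch-decomposition $T$ of $M$ of width $k$. For each edge $e$ of $T$, removing $e$ partitions the leaves into two non-empty sets (each component of $T\setminus e$ is a tree and contains at least one leaf of $T$), thereby displaying a bipartition $(X_e, E(M)\setminus X_e)$; the width condition gives $r_M(X_e)+r_M(E(M)\setminus X_e)-r(M)+1\le k$, so this bipartition is a $k$-separation in the paper's sense.

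Next I would traverse a path $P=v_0v_1\cdots v_\ell$ in $T$ and, writing $e_i=v_{i-1}v_i$, let $A_i$ be the set displayed by the component of $T\setminus e_i$ containing $v_0$. Going from $A_i$ to $A_{i+1}$ annexes the vertex $v_i$ together with the single ``side'' subtree hanging off it away from $P$; because $T$ is cubic and $v_i$ is an internal vertex of $P$, this side subtree is non-empty and contains at least one leaf of $T$. Hence $A_1\subsetneq A_2\subsetneq\cdots\subsetneq A_\ell$ is a nested sequence of $\ell$ $k$-separations of $M$, and Theorem~\ref{main} forces $T$ to contain no path of length $n$.

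The concluding step invokes the observation stated just before the corollary: a cubic tree without a path of length $n$ has at most $3^n$ leaves, and therefore $|M|\le 3^n=\tower(3,q,q+k,k+1,4)$. The only step with any content is the strict nesting along a path; the rest is direct bookkeeping.
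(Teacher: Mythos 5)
Your proof is correct and follows exactly the route the paper intends: the paper offers no explicit proof beyond observing that a cubic tree with no path of length $n$ has at most $3^n$ leaves and citing Theorem~\ref{main}. You supply the one detail the paper leaves implicit---that each edge of a width-$k$ branch-decomposition displays a $k$-separation in the paper's (non-standard) sense, and that moving one step along a path strictly enlarges the displayed set because each internal path vertex of a cubic tree hangs a non-empty side subtree, hence a leaf of $T$---and this correctly yields a chain of $\ell$ nested $k$-separations from a path of length $\ell$, so $T$ has no path of length $n=\tower(q,q+k,k+1,4)$ and $|M|\le 3^n$.
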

Corollary~\ref{bw} improves the main result in~[\ref{gw}] 
which gives a non-computable bound on $|M|$.

Our main result, Theorem~\ref{main2}, is an extension
of Theorem~\ref{main} that involves representabilty over several
fields. 

\section{preliminaries}

We use the following standard notation:
we denote the power set of a set $E$ by $2^E$, and,
if $f$ is a function whose domain is a set $E$ 
and $X\subseteq E$, then we denote
$\{f(x)\, : \, x\in X\}$ by $f(X)$.

We follow the terminology of Oxley~[\ref{oxley}],
except we write $|M|$ for the size of a matroid $M$.
For a finite field $\bF$, we define an {\em represented matroid}
to be a pair $(M,S)$ where $\si(S)$ is a projective geometry
over $\bF$ and $M$ is a restriction of $S$. 
For a represented matroid $(M,S)$ and $X\subseteq E(M)$,
we denote $\cl_S(X)$ by $\Span(X)$.
For disjoint sets
$D,C\subseteq E(M)$, we call $(M\del D\con C, S\con C)$
a {\em minor} of $(M,S)$.  For notational
convenience we will usually refer to the represented matroid
by $M$ alone, and write $S_M$ for $S$.

Let $M$ be a matroid.
For $X,Y\subseteq E(M)$, we define
\begin{eqnarray*}
\sqcap_M(X,Y)&=& r_M(X) + r_M(Y) - r_M(X\cup Y) \mbox{ and}\\
\lambda_M(X) &=& \sqcap_M(X,E(M)-X).
\end{eqnarray*}
Thus, if $M$ is representable, then
$$ \sqcap_M(X,Y) = r_{S_M}(\Span(X)\cap\Span(Y)).$$

It is well-known that $\lambda$ is submodular; that is,
$$ \lambda_M(X) + \lambda_M(Y) \ge 
\lambda_M(X\cap Y) + \lambda_M(X\cup Y).$$
If $X$ and $Y$ are disjoint, then  expanding the definitions
reveals that
\begin{equation*}
 \lambda_{M\con Y}(X) = \lambda_M(X) - \sqcap_M(X,Y).
\end{equation*}

A {\em $k$-separation}
is a partition $(A,B)$ of $M$ such that $|A|,|B|\ge k$ and
$\lambda_M(A)<k$; if $\lambda_M(A)=k-1$, then $(A,B)$
is said to be an {\em exact $k$-separation}.

For disjoint sets $X$ and $Y$ of elements, we define
$$\kappa_M(X,Y) = \min(\lambda_M(Z)\, : \, X\subseteq Z\subseteq E(M)-Y).$$
For any set $C\subseteq E(M)-(X\cup Y)$ it is straightforward
to show that $\sqcap_{M\con C}(X,Y)\le \kappa_M(X,Y)$.
The following result, due to Tutte (see~[\ref{oxley}, Theorem 8.5.2]),
shows that there exists $C$ for which equality is attained; this
differs from Tutte's formulation, but the two versions
are readily seen to be equivalent.
\begin{theorem}[Tutte's Linking Theorem]
If $X$ and $Y$ are  disjoint sets of elements in a matroid $M$,
then there is an independent set $C\subseteq E(M)-(X\cup Y)$
such that $\sqcap_{M\con C}(X,Y) = \kappa_M(X,Y)$.
\end{theorem}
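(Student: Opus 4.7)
My plan is to prove the theorem by induction on $|E(M) \del (X \cup Y)|$. The base case $E(M) = X \cup Y$ is immediate: the only eligible set is $Z = X$, so $\kappa_M(X, Y) = \lambda_M(X) = \sqcap_M(X, Y)$ and $C = \emptyset$ works.

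For the inductive step, I would pick any $e \in E(M) \del (X \cup Y)$ and prove the key claim that either $\kappa_{M \del e}(X, Y) = \kappa_M(X, Y)$ or $\kappa_{M \con e}(X, Y) = \kappa_M(X, Y)$. Granted this, the inductive hypothesis applied to the appropriate minor produces an independent set witnessing the linking number. In the deletion case the set $C$ remains independent in $M$, and since deleting an element outside $X \cup Y$ does not alter ranks of subsets of $E - e$, we have $\sqcap_{M \con C}(X, Y) = \sqcap_{(M \del e) \con C}(X, Y) = \kappa_M(X, Y)$. In the contraction case, if $e$ is not a loop I would extend the inductively produced $C'$ to $C = C' \cup \{e\}$, which is independent in $M$ because $C'$ is independent in $M \con e$, and then $\sqcap_{M \con C}(X, Y) = \sqcap_{(M \con e) \con C'}(X, Y) = \kappa_M(X, Y)$. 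If $e$ is a loop or a coloop the claim is automatic: $M \con e = M \del e$ when $e$ is a loop, and one checks directly that deletion preserves $\lambda_M$ when $e$ is a coloop.

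The main obstacle is the key claim for $e$ neither a loop nor a coloop, which I would establish by contradiction using submodularity. Suppose $k = \kappa_M(X, Y)$ but $\kappa_{M \del e}(X, Y) < k$ and $\kappa_{M \con e}(X, Y) < k$. The contraction formula from the preliminaries gives $\lambda_{M \con e}(A) = \lambda_M(A) - [e \in \cl_M(A)]$ for $A \subseteq E - e$, and an analogous rank computation yields $\lambda_{M \del e}(A) = \lambda_M(A) - [e \notin \cl_M(E - e - A)]$. Hence any minimizers $Z_1$ for $M \del e$ and $Z_2$ for $M \con e$ must satisfy $\lambda_M(Z_1) = \lambda_M(Z_2) = k$, together with $e \notin \cl_M(E - e - Z_1)$ and $e \in \cl_M(Z_2)$. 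Since $Z_1 \cap Z_2$ and $Z_1 \cup Z_2$ are both eligible for $\kappa_M(X, Y)$, submodularity of $\lambda_M$ forces $\lambda_M(Z_1 \cup Z_2) = k$. But $e \in \cl_M(Z_1 \cup Z_2)$ and $e \notin \cl_M(E - e - (Z_1 \cup Z_2))$, so $\lambda_M((Z_1 \cup Z_2) \cup \{e\}) = k - 1$, contradicting the fact that this set is also eligible for $\kappa_M(X, Y)$.
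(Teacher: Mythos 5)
Your proof is correct: the deletion/contraction induction, with the submodularity argument showing that some minimizer's $\lambda$-value survives either $M\del e$ or $M\con e$, is exactly the standard proof of Tutte's Linking Theorem. The paper itself offers no proof, citing Oxley's Theorem 8.5.2 instead, and your argument matches that standard treatment, so there is nothing to flag.
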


We also require the following results.
\begin{lemma}\label{seq}
Let $(A_1,A_2,A_3,A_4)$ be a partition of the ground set of a matroid
$M$ and let $k\in\bN$ such that 
$\lambda_M(A_1\cup A_2)
=\kappa_M(A_1,A_3\cup A_4) = \kappa_M(A_1\cup A_2,A_4) = k$,
then $\kappa_M(A_1,A_4) = k$.
\end{lemma}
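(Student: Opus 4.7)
The plan is to establish the equality by bounding $\kappa_M(A_1,A_4)$ both above and below by $k$. The upper bound is immediate: the set $A_1 \cup A_2$ satisfies $A_1 \subseteq A_1\cup A_2 \subseteq E(M) - A_4$, so by the definition of $\kappa$ we get $\kappa_M(A_1,A_4) \le \lambda_M(A_1\cup A_2) = k$.

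The lower bound is where the hypotheses really get used. I would take an arbitrary $Z$ with $A_1 \subseteq Z \subseteq A_1\cup A_2 \cup A_3$ and aim to show $\lambda_M(Z) \ge k$. The natural move is to apply submodularity to $Z$ and $A_1 \cup A_2$:
$$\lambda_M\bigl(Z\cap(A_1\cup A_2)\bigr) + \lambda_M\bigl(Z\cup(A_1\cup A_2)\bigr) \le \lambda_M(Z) + \lambda_M(A_1\cup A_2) = \lambda_M(Z) + k.$$
Because $A_1 \subseteq Z$, we have $Z \cap (A_1\cup A_2) = A_1 \cup (Z\cap A_2)$, which is sandwiched between $A_1$ and $A_1\cup A_2 = E(M) - (A_3\cup A_4)$. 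The hypothesis $\kappa_M(A_1,A_3\cup A_4) = k$ therefore gives $\lambda_M(Z\cap(A_1\cup A_2)) \ge k$. Because $Z \subseteq A_1\cup A_2\cup A_3$, we have $Z \cup (A_1\cup A_2)$ sandwiched between $A_1\cup A_2$ and $A_1\cup A_2\cup A_3 = E(M) - A_4$. The hypothesis $\kappa_M(A_1\cup A_2,A_4) = k$ then gives $\lambda_M(Z\cup(A_1\cup A_2)) \ge k$. Combining, $2k \le \lambda_M(Z) + k$, so $\lambda_M(Z) \ge k$ as desired.

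There is no real obstacle here: the argument is a one-line application of submodularity once one identifies the right two sets to intersect and union. The only thing to be careful about is making sure the intermediate sets $Z\cap(A_1\cup A_2)$ and $Z\cup(A_1\cup A_2)$ actually land in the intervals over which the two $\kappa$-hypotheses apply, which the inclusions $A_1 \subseteq Z$ and $Z \subseteq A_1\cup A_2\cup A_3$ guarantee. Together with the upper bound, this yields $\kappa_M(A_1,A_4) = k$.
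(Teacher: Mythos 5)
Your proof is correct and is essentially the paper's own argument: both establish the upper bound via $\lambda_M(A_1\cup A_2)=k$ and the lower bound by applying submodularity to $Z$ (the paper's $X$, taken to be a minimizer rather than arbitrary) and $A_1\cup A_2$, then bounding $\lambda_M(A_1\cup(Z\cap A_2))$ and $\lambda_M(Z\cup A_1\cup A_2)$ below by the two $\kappa$ hypotheses. There is no substantive difference in approach.
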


\begin{proof}
Let $X$ be a set with
$A_1\subseteq X \subseteq E(M)-A_4$ and
$\lambda_M(X) = \kappa_M(A_1,A_4)$. By submodularity,
$$ \lambda_M(X) + \lambda_M(A_1\cup A_2) \ge 
\lambda_M( A_1\cup (X\cap A_2) ) + 
\lambda_M(A_1\cup A_2\cup X).$$
Now $A_1\subseteq A_1\cup (X\cap A_2) \subseteq E(M) - (A_3\cup A_4)$ and
$A_1\cup A_2 \subseteq A_1\cup A_2\cup X \subseteq E(M) -  A_4$, so
$\lambda_M( A_1\cup (X\cap A_2) ) \ge \kappa_M(A_1,A_3\cup A_4) =k$ and
$\lambda_M( A_1\cup A_2\cup X ) \ge \kappa_M(A_1\cup A_2, A_4) =k$.
Moreover $\lambda_M(A_1\cup A_2) = k$, so
$\kappa_M(A_1,A_4) = \lambda_M(X) \ge k$.
On the other hand $\kappa_M(A_1,A_4)\le \lambda_M(A_1\cup A_2) = k$.
\end{proof}

\begin{lemma}\label{seqcon}
Let $M$ be a matroid, let $k\in \bN$, and let $S,T,X, C,D\subseteq 
E(M)$ such that
$(S,T,C,D)$ is a partition of $E(M)$,
$S\subseteq X\subset E(M)-T$, and
$k=\lambda_M(X) = \lambda_{M\con C\del D}(S)$.
Then $\lambda_{M\con (C-X)\del D-X}(X) = k$.
\end{lemma}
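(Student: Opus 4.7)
The plan is to sandwich $\lambda_{M\con(C-X)\del(D-X)}(X)$ between $\lambda_M(X)=k$ from above and $\lambda_{M\con C\del D}(S)=k$ from below. Write $C'=C-X$, $D'=D-X$, $X_C=X\cap C$, $X_D=X\cap D$, and $N=M\con C'\del D'$; then $X=S\cup X_C\cup X_D$, and because contraction and deletion on disjoint sets commute, $N\con X_C\del X_D=M\con C\del D$, so the second hypothesis rereads as $\lambda_{N\con X_C\del X_D}(S)=k$.

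For the upper bound $\lambda_N(X)\le\lambda_M(X)$, I would invoke two standard monotonicity facts. The contraction identity displayed in the preliminaries gives $\lambda_{M\con Y}(X)=\lambda_M(X)-\sqcap_M(X,Y)\le\lambda_M(X)$ whenever $X$ and $Y$ are disjoint, and for deletion the inequality $\lambda_{M\del Y}(X)\le\lambda_M(X)$ is an immediate consequence of submodularity applied to the pair $(E(M)-X,\,E(M)-Y)$, whose union is $E(M)$. Applying the deletion bound with $Y=D'$ and then the contraction bound with $Y=C'$ (inside $M\del D'$) yields $\lambda_N(X)\le k$.

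For the lower bound I would establish the key inequality $\lambda_N(X)\ge\lambda_{N\con X_C\del X_D}(S)$ in two telescoping submodularity steps. The first, $\lambda_N(W\cup X_C)\ge\lambda_{N\con X_C}(W)$ for any $W\subseteq E(N)-X_C$, unpacks via the contraction identity to the elementary rank inequality $r_N(X_C)\ge r_N(E(N)-W)-r_N(E(N)-W-X_C)$, which holds because the right side is the rank contribution of $X_C$ to $E(N)-W-X_C$. The second, $\lambda_{N'}(Z\cup X_D)\ge\lambda_{N'\del X_D}(Z)$ in any matroid $N'$ with $Z\subseteq E(N')-X_D$, unpacks to the submodularity inequality $r_{N'}(Z\cup X_D)+r_{N'}(E(N')-X_D)\ge r_{N'}(Z)+r_{N'}(E(N'))$ on the pair $(Z\cup X_D,\,E(N')-X_D)$. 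Taking $W=S\cup X_D$ in the first step and then $N'=N\con X_C$, $Z=S$ in the second step chains them into $\lambda_N(X)\ge\lambda_{N\con X_C\del X_D}(S)=k$, completing the sandwich.

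The only real obstacle I foresee is keeping the partition bookkeeping straight: at each contraction or deletion one has to reverify that the relevant sets are pairwise disjoint and lie in the ambient ground set as required, and one has to be careful that the deletion step in the lower bound is applied in the already-contracted matroid $N\con X_C$ rather than in $N$ itself. Once the target inequality $\lambda_N(X)\ge\lambda_{N\con X_C\del X_D}(S)$ is isolated, the remaining work is the same flavour of short submodularity calculation used in the proof of Lemma~\ref{seq}.
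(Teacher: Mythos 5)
Your proof is correct and follows essentially the same route as the paper: the paper's (very terse) argument is exactly the sandwich $k=\lambda_M(X)\ge\lambda_{M\con(C-X)\del(D-X)}(X)\ge\lambda_{M\con C\del D}(S)=k$, and you simply make explicit the standard monotonicity facts (contraction/deletion outside $X$ does not increase $\lambda_M(X)$, and passing to the minor $M\con C\del D$ does not increase the connectivity of the image set $S$) that the paper leaves implicit.
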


\begin{proof}
We have 
$k = \lambda_M(X) \ge
\lambda_{M\con (C-X)\del D-X}(X) ge
\lambda_{M\con C\del D}(S) \ge k$.
\end{proof}

The following lemma provides a crude upper bound on the number of flats
in a projective geometry.
\begin{lemma}\label{flats}
The number of flats of $PG(k-1,q)$ is at most $\tower(q,k,2)$.
\end{lemma}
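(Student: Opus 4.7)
The plan is to use a very crude counting argument, which suffices because the target bound $\tower(q,k,k) = q^{k^k}$ is enormous. The key observation is that every flat of $PG(k-1,q)$, having rank at most $k$, is the closure of some subset of the ground set of size at most $k$, so it is enough to bound the number of small subsets of the ground set.

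First, I would observe that $|PG(k-1,q)| = (q^k-1)/(q-1) \le q^k$. Then the number of subsets of the ground set of size at most $k$ is at most
$$\sum_{r=0}^{k}\binom{q^k}{r} \le (k+1)\binom{q^k}{k} \le (k+1)q^{k^2},$$
so the number of flats is at most $(k+1)q^{k^2}$.

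Next I would verify $(k+1)q^{k^2}\le q^{k^k}$ for $k\ge 3$, which is straightforward since $k^k - k^2 \ge k \ge \log_q(k+1)$ whenever $q\ge 2$. The small cases $k=1$ and $k=2$, where the crude polynomial bound above actually exceeds $q^{k^k}$, I would dispose of by direct enumeration: $PG(0,q)$ has only two flats (the empty flat and the single point), and $2\le q$ for every prime power $q$; while $PG(1,q)$ has $q+3$ flats (the empty flat, the $q+1$ points, and the whole line), and $q+3\le q^4$ for $q\ge 2$.

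There is no substantial obstacle — the tower bound is so generous that essentially any reasonable counting works. The only minor delicacy is that the single crude estimate $(k+1)q^{k^2}$ does not handle $k\in\{1,2\}$ uniformly, so these are best treated by inspection.
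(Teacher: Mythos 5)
Your proof is correct and is essentially the paper's argument: every flat has rank at most $k$, so it is determined by at most $k$ spanning points, and the crude count is far below $\tower(q,k,k)$. The only difference is cosmetic — the paper counts ordered lists of $k$ vectors in $\GF(q)^k$ spanning the flat, giving $q^{k^2}\le\tower(q,k,k)$ uniformly in $k$, which avoids your extra factor $(k+1)$ and hence the separate inspection of $k=1,2$.
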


\begin{proof} 
For each flat of $PG(k-1,q)$ there is a list of $k$ points
in GF$(q)^k$ that span the flat.
So the number of flats is at most $\left(q^k\right)^k=\tower(q,k,2)$.
\end{proof}

\section{Inequivalent representations and $k$-separations}

The fact that a matroid can have many inequivalent
representations over a field is a major cause of
difficulties in attacking Rota's Conjecture.
In this section we develop techniques that enable us to
control inequivalent representations relative to a 
given $k$-separation.

The following  two lemmas are primarily intended as motivation
for the definition of a ``scheme" that comes at the end of this section.
\begin{lemma}\label{motivation1}
Let $\bF$ be a finite field, let $M$ and $N$
be $\bF$-represented matroids with $E(M)=E(N)$,
and let $(A,B)$ be a partition of $E(M)$.
Then $M=N$ if and only if
\begin{itemize}
\item[(i)] $r_M(X) = r_N(X)$ for each $X\subseteq A$,
\item[(ii)] $r_M(Y) = r_N(Y)$ for each $Y\subseteq B$, and
\item[(iii)] $\sqcap_M(X,Y) = \sqcap_N(X,Y)$ for each
$X\subseteq A$ and $Y\subseteq B$.
\end{itemize}
\end{lemma}

\begin{proof} Conditions $(i)$, $(ii)$, and $(iii)$ are clearly 
nesessary, for the converse, suppose that they hold.
Then, for any set $Z\subseteq E(M)$, we have
\begin{eqnarray*}
 r_M(Z) &=& r_M(Z\cap A) + r_M(Z\cap B) - \sqcap_M(Z\cap A,Z\cap B) \\
  &=& r_N(Z\cap A) + r_N(Z\cap B) - \sqcap_N(Z\cap A,Z\cap B) \\
&=& r_N(Z).
\end{eqnarray*}
Thus $M=N$.
\end{proof}

\begin{lemma}\label{motivation2}
Let $\bF$ be a finite field, let $(A,B)$ be a
$k$-separation in a matroid $M$, let $X\subseteq A$
and $Y\subseteq B$, and let $W=\Span(A)\cap \Span(B)$.
Then $\sqcap_M(X,Y) = \sqcap_{S_M}(W\cap \Span(X), W\cap\Span(Y))$.
\end{lemma}

\begin{proof}
The result follows from the fact that 
$\Span(X)\cap\Span(Y) \subseteq \Span(A)\cap \Span(B) = W$.
\end{proof}

Let $(A,B)$ be a $k$-separation in a matroid $M$.
Two sets $X,Y\subseteq A$ are said to be {\em equivalent}
if $\sqcap_M(X,Z)=\sqcap_M(Y,Z)$ for all $Z\subseteq B$.
We let $\cP(M,A)$ denote the partition of the power set of
$A$ into equivalence classes.
\begin{lemma}\label{classes1}
Let $\bF$ be a finite field of order $q$, let $M$ be a 
$\bF$-represented matroid, let $k\ge 2$ be an integer,
and let $(A,B)$ be a $k$-separation in $M$.
Then $|\cP(M,A)|\le \tower(q,k-1,2)$.
\end{lemma}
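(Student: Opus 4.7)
The plan is to fix an $\bF$-representation of $M$ and show that the $\cP(M,A)$-class of a set $X\subseteq A$ is determined entirely by a certain subspace of an ambient space $V$ of dimension at most $k-1$. The bound will then follow from a crude count of the subspaces of $V$.

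I will identify each element of $M$ with its (nonzero, since $M$ is loopless) representing vector in $\bF^{r(M)}$ and write $\langle Y\rangle$ for the $\bF$-span of $Y\subseteq E(M)$. Let $V=\langle A\rangle\cap\langle B\rangle$. Since $M$ is loopless and representable, $r_M(Y)=\dim\langle Y\rangle$ for every $Y$, so
$$\dim V = \dim\langle A\rangle+\dim\langle B\rangle-\dim(\langle A\rangle+\langle B\rangle) = r_M(A)+r_M(B)-r(M)=\lambda_M(A)\le k-1.$$
For each $X\subseteq A$ associate the subspace $V_X:=\langle X\rangle\cap V$ of $V$. The key identity is
$$\sqcap_M(X,Z)=\dim\bigl(\langle X\rangle\cap\langle Z\rangle\bigr)=\dim\bigl(V_X\cap\langle Z\rangle\bigr)\quad\text{for all }Z\subseteq B.$$
The first equality is the standard formula for $\sqcap$ in a loopless representable matroid; the second holds because $\langle X\rangle\cap\langle Z\rangle\subseteq\langle A\rangle\cap\langle B\rangle=V$ and $\langle X\rangle\cap\langle Z\rangle\subseteq\langle X\rangle$, so $\langle X\rangle\cap\langle Z\rangle\subseteq V_X\cap\langle Z\rangle$, while the reverse inclusion is immediate. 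Consequently $V_X=V_Y$ forces $X$ and $Y$ to lie in the same class of $\cP(M,A)$, so $|\cP(M,A)|$ is bounded by the number of subspaces of $V$.

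To finish, every subspace of $V$ is spanned by a list of at most $\dim V\le k-1$ vectors from $V$, and $|V|\le q^{k-1}$, so the number of subspaces of $V$ is at most $(q^{k-1})^{k-1}=q^{(k-1)^2}=\tower(q,k-1,2)$, as required. The only substantive step is the displayed key identity; once that is established, the rest is bookkeeping. The one pitfall to watch is that the loopless hypothesis is precisely what guarantees $r_M(Y)=\dim\langle Y\rangle$ and thus makes the translation between matroid rank and vector-space dimension legitimate throughout.
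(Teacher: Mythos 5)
Your proof is correct and is essentially the paper's argument in linear-algebraic rather than projective language: your $V=\langle A\rangle\cap\langle B\rangle$ and the map $X\mapsto \langle X\rangle\cap V$ correspond exactly to the paper's rank-$(\le k-1)$ projective geometry $N_0=N|(\cl_N(\psi(A))\cap\cl_N(\psi(B)))$ and the flat $\cl_N(\psi(X))\cap E(N_0)$, and the crude count of subspaces by spanning lists is the paper's Lemma on flats. (Only your closing remark is slightly off: $r_M(Y)=\dim\langle Y\rangle$ holds even with loops, since loops are zero vectors; looplessness is needed only for the projective embedding, not for your identity.)
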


\begin{proof}
By possibly reducing $k$,
we may assume that $(A,B)$ is an exact $k$-separation.
Let $W = \Span(X)\cap \Span(Y)$, thus
$\si(S_M|W)$ is isomorphic to $\PG(k-2,q)$.
Note that, for each flat $F$ of $S_M|W$, if
$X,Y\subseteq A$ such that $\Span(X)\cap W =F$
and $\Span(Y)\cap W =F$, then, by Lemma~\ref{motivation2},
the sets $X$ and $Y$ are equivalent. 
So the result follows by Lemma~\ref{flats}.
\end{proof}

The next result enables us to bound $|\cP(M,A)|$ when 
$M$ is an excluded minor.
\begin{lemma}\label{classes2}
Let $k\ge 2$ be an integer, let $(A,B)$ be a $k$-separation
in a matroid $M$, and let $e\in B$.  
If either $e\notin\cl_M(A)$ or $e\not\in\cl_{M^*}(A)$, then for each
equivalence class $P\in\cP(M,A)$ there exist equivalence classes
$P_1\in\cP(M\del e,A)$ and $P_2\in\cP(M\con e,A)$
such that $P=P_1\cap P_2$.
\end{lemma}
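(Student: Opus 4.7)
The plan is to show the equality $P=P_1\cap P_2$ where $P_1=[X]_{M\del e}$ and $P_2=[X]_{M\con e}$ for any chosen $X\in P$; that is, for $X,Y\subseteq A$, the equivalence $X\sim Y$ in $\cP(M,A)$ is equivalent to $X\sim Y$ in both $\cP(M\del e,A)$ and $\cP(M\con e,A)$. The direction $P\subseteq P_1\cap P_2$ is routine and just requires checking that $\sqcap_M(X,Z)$ for $Z\subseteq B$ determines $\sqcap_{M\del e}(X,Z\cap(B-e))$ and $\sqcap_{M\con e}(X,Z\cap(B-e))$ in a way independent of $X$.

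The real content is the reverse inclusion. Given $X,Y\subseteq A$ with $X\sim Y$ in both $\cP(M\del e,A)$ and $\cP(M\con e,A)$, I would show $\sqcap_M(X,W)=\sqcap_M(Y,W)$ for every $W\subseteq B$ by splitting into cases on whether $e\in W$. For $W\subseteq B-e$, one has the identity $\sqcap_{M\del e}(X,W)=\sqcap_M(X,W)$ directly from $r_{M\del e}=r_M|_{2^{E(M)-e}}$, so hypothesis one handles this case immediately.

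For $W=Z\cup\{e\}$ with $Z\subseteq B-e$, I would use the key identity
\[
\sqcap_{M\con e}(X,Z)=\sqcap_M(X,Z\cup\{e\})-\sqcap_M(X,\{e\}),
\]
obtained by expanding $r_{M\con e}(\cdot)=r_M(\cdot\cup\{e\})-r_M(\{e\})$ in the definition of $\sqcap_{M\con e}$. Hypothesis two gives equality of the left-hand side for $X$ and $Y$, so it remains to show $\sqcap_M(X,\{e\})=\sqcap_M(Y,\{e\})$. This is exactly the $Z=\emptyset$ case: hypothesis one at $Z=\emptyset$ gives $r_M(X)=r_M(Y)$, and hypothesis two at $Z=\emptyset$ gives $r_M(X\cup\{e\})=r_M(Y\cup\{e\})$, whence $\sqcap_M(X,\{e\})=r_M(X)+r_M(\{e\})-r_M(X\cup\{e\})$ agrees for $X$ and $Y$.

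The only mild subtlety is the $Z=\emptyset$ extraction of $\sqcap_M(X,\{e\})$, but since this quantity sits in $\{0,r_M(\{e\})\}$ and is pinned down by rank equalities already available from the two hypotheses, no additional work is needed. Beyond that, the argument is bookkeeping with the definitions of $\sqcap$, $r_{M\del e}$, and $r_{M\con e}$.
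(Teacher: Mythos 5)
Your setup is right: the easy direction, the case split on whether $e\in W$, and the identity $\sqcap_{M\con e}(X,Z)=\sqcap_M(X,Z\cup\{e\})-\sqcap_M(X,\{e\})$ are all correct, and they correctly isolate the single remaining obligation, namely $\sqcap_M(X,\{e\})=\sqcap_M(Y,\{e\})$. The gap is in how you discharge it. With $\sqcap_M(X,Z)=r_M(X)+r_M(Z)-r_M(X\cup Z)$ (the reading you are clearly using, since that is how you expand $\sqcap_M(X,\{e\})$), the $Z=\emptyset$ instance of either hypothesis is vacuous: $\sqcap_{M\del e}(X,\emptyset)=r_M(X)+0-r_M(X)=0$ for every $X$, and likewise in $M\con e$. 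So ``hypothesis one at $Z=\emptyset$ gives $r_M(X)=r_M(Y)$'' is simply false; the equivalences only ever control the differences $r_M(X\cup Z)-r_M(X)$, never $r_M(X)$ or $r_M(X\cup\{e\})$ themselves. (The paper offers no proof to compare against --- it asserts the lemma follows from the definitions --- so your argument has to stand on its own here, and this step does not.)

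Moreover this is not a repairable bookkeeping slip: under that reading of $\sqcap$, the missing equality genuinely does not follow from your two hypotheses. Take $M$ of rank $3$ on $\{x,x',e,z\}$ represented by $x=e=(1,0,0)$, $x'=(0,1,0)$, $z=(0,0,1)$, and let $A=\{x,x'\}$, $B=\{e,z\}$ (a $2$-separation, so the hypotheses of the lemma hold), $X=\{x'\}$, $Y=\{x,x'\}$. All the relevant local connectivities in $M\del e$ and $M\con e$ vanish, so $X$ and $Y$ are equivalent in both minors, yet $\sqcap_M(X,\{e\})=1+1-2=0$ while $\sqcap_M(Y,\{e\})=2+1-2=1$, so they are not equivalent in $M$. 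Thus the reverse inclusion cannot be obtained from the two minor-equivalences as you have formulated them; some further input is needed (for instance, information pinning down $r_M(X\cup\{e\})-r_M(X)$, such as the trace of $\cl_M(X)$ on the guts of the separation). Note that if one instead reads the paper's displayed formula for $\sqcap$ literally (it has $r_M(X)+r_M(X)$, so equivalence then records $r_M(X\cup Z)$ for every $Z\subseteq B$, including $Z=\emptyset$), your $Z=\emptyset$ step does yield the rank equalities and the whole argument becomes immediate; but that is a strictly finer equivalence than the local-connectivity one your expansion uses and that the counting in Lemma~\ref{classes1} relies on, so you should at least flag which definition you are working with rather than silently extracting $r_M(X)=r_M(Y)$ from a quantity that is identically zero.
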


\begin{proof}
There exist equivalence classes 
$P_1\in\cP(M\del e,A)$ and $P_2\in\cP(M\con e,A)$
with $P\subseteq P_1$ and $P\subseteq P_2$.
Suppose that $P\neq P_1\cap P_2$.
Then there are inequivalent sets $X\in P$ and
$Y\in (P_1\cap P_2)-P$. So there is a set $Z\subseteq B$
such that $\sqcap_M(X,Z)\neq\sqcap_M(Y,Z)$.
If $e\not \in Z$, then $\sqcap_{M\del e}(X,Z)\neq\sqcap_{M\del e}(Y,Z)$,
contradicting that $X,Y\in P_1$.
Thus $e\in Z$. 
Suppose that $\sqcap_M(X,\{e\}) = \sqcap_M(Y,\{e\})$.
Then
\begin{eqnarray*}
\sqcap_{M\con e}(X,Z-\{e\})&=&
 \sqcap_{M}(X,Z) - \sqcap_M(X,\{e\}) \\
& \neq&  \sqcap_{M}(Y,Z)- \sqcap_M(Y,\{e\}) \\
&=& \sqcap_{M\con e}(Y,Z-\{e\}),
\end{eqnarray*}
contradicting that $X,Y\in P_2$.
Thus $\sqcap_M(X,\{e\}) \neq \sqcap_M(Y,\{e\})$. 
It follows that $e\in \cl_M(X\cup Y) \subseteq \cl_M(A)$.
By the hypotheses of the lemma, $e\not\in \cl_{M^*}(A)$.
Then, since $X,Y\in P_1$,
$\sqcap_M(X,B) = \sqcap_{M\del e}(X,B-\{e\})
=\sqcap_{M\del e}(Y,B-\{e\}) = \sqcap_M(Y,B)$.
It follows that
$\sqcap_{M\con e}(X,B-\{e\})
= \sqcap_M(X,B) - \sqcap_M(X,\{e\})
\neq \sqcap_M(Y,B) - \sqcap_M(Y,\{e\})
= \sqcap_{M\con e}(Y,B-\{e\}). $
This contradicts the fact that $X,Y\in P_2$.
\end{proof}

Let $(A,B)$ be a $k$-separation in a represenatable matroid
$M$ and let $W= \Span(A)\cap\Span(B)$.
The proof of Lemma~\ref{classes1} gives a sufficient
condition for two sets $X,Y\subseteq A$ to be equivalent;
namely that $\Span(X)\cap W = \Span(Y)\cap W$.
The following result characterizes the equivalent pairs;
this is a very tecnical result, but conceptually important.
The proof of the result follows directly from definitions,
so we omit it.
\begin{lemma}\label{equivalentpairs}
Let $\bF$ be a finite field of order $q$, let $M$ be a 
$\bF$-represented matroid, let $k\ge 2$ be an integer,
let $(A,B)$ be a $k$-separation in $M$, and let $W=\Span(A)\cap \Span(B)$.
Now, let $W_X = \Span(X)\cap W$ and $W_Y = \Span(Y)\cap W$,
and let $\cF$ denote the set of all flats $F$ of $S_M|W$ 
such that there exists $Z\subseteq B$ with
$\Span(Z)\cap W = F$.
Then $X$ and $Y$ are equivalent if and only if
$r_{S_M}(W_X\cap F) = r_{S_M}(W_Y\cap F)$ for each flat 
$F\in \cF$.
\end{lemma}

Let $(A,B)$ be an exact $k$-separation in a matroid $M$ and
let $\bF$ be a finite field of order $q$.
Let $\cF(k-2,\bF)$ be the set of all flats of $N=PG(k-2,\bF)$.
An {\em $\bF$-scheme}, or {\em scheme}, for $(A,B)$ is a function
$\sigma: \cP(M,A)\rightarrow 2^{\cF(k-2,\bF)}$.
A scheme $\sigma$ for $(A,B)$ is {\em realizable}
if there exists an $\bF$-representable matroid $M'$ such that
\begin{itemize}
\item $E(M') = A\cup E(N)$, $M'|A = M|A$, and $M'|E(N) = N$, and
\item for each $P\in \cP(M,A)$ and $X\in P$,
$\cl_{M'}(X)\cap E(N) \in \sigma(P)$.
\end{itemize}

We define a function $\pi: \cP(M,A)\times \cP(M,B)\rightarrow \bZ$
such that, for each $P_1\in \cP(M,A)$ and $P_2\in\cP(M,B)$,
$\pi(P_1,P_2) = \sqcap_M(X,Y)$ where $X\in P_1$ and $Y\in P_2$.
Let $\sigma_1$ be a scheme for $(A,B)$ and let 
$\sigma_2$ be a scheme for $(B,A)$. We say that
$\sigma_1$ and $\sigma_2$ are {\em compatible} if
for each $P_1\in\cP(M,A)$, $P_2\in\cP(M,B)$, $F_1\in\sigma_1(P_1)$,
and $F_2\in\sigma_2(P_2)$ we have
$\sqcap_N(F_1,F_2) = \pi(P_1,P_2)$.

The next lemma follows directly from these definitions.
\begin{lemma}\label{majic}
Let $(A,B)$ be an exact $k$-separation in a matroid $M$
and let $\bF$ be a finite field. Then $M$ is $\bF$-representable
if and only if there exist realizable schemes $\sigma_1$ 
for $(A,B)$ and $\sigma_2$ for $(B,A)$ that are compatible.
\end{lemma}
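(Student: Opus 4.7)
The plan is to use the same device in both directions: identify the ``guts'' $N_0 := \cl(A)\cap \cl(B)$ of an $\bF$-representation of $M$ (closures taken in the ambient projective geometry $\PG(r(M)-1,\bF)$) with the copy of $N = \PG(k-2,\bF)$ fixed by the scheme. Since $(A,B)$ is exact, $N_0$ has rank $k-1$ and so is isomorphic to $N$; fix any such isomorphism.

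For the forward direction, assume $M$ is $\bF$-representable. Let $M'_1$ be the restriction of the ambient $\PG$ to $A\cup E(N_0)$ (and $M'_2$ symmetrically), and for each $P\in\cP(M,A)$ set $\sigma_1(P) = \{\cl(X)\cap E(N_0) : X\in P\}$, with $\sigma_2$ defined analogously. Realizability is immediate from the construction. For compatibility, modularity of the ambient $\PG$ gives $\sqcap_M(X,Y) = r(\cl(X)\cap \cl(Y))$ for $X\subseteq A,\,Y\subseteq B$; since $\cl(X)\subseteq\cl(A)$ and $\cl(Y)\subseteq\cl(B)$, this intersection lies inside $N_0$ and equals $(\cl(X)\cap N_0)\cap(\cl(Y)\cap N_0)$, so $\sqcap_M(X,Y) = \sqcap_N(\cl(X)\cap N_0,\,\cl(Y)\cap N_0)$. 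This is exactly the condition required for compatibility of $\sigma_1$ and $\sigma_2$.

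For the reverse direction, let $M'_1,M'_2$ be witnesses to the realizability of compatible schemes $\sigma_1,\sigma_2$. Fix $\bF$-representations of $M'_1$ and $M'_2$; since every $\bF$-representation of $\PG(k-2,\bF)$ is unique up to projective equivalence, I can align the two representations so that the two copies of $N$ coincide as a common flat of a larger ambient $\PG$ of rank $r(M'_1)+r(M'_2)-(k-1)$. This yields an $\bF$-representable matroid $M''$ on $A\cup B\cup E(N)$ with $M''|(A\cup E(N))=M'_1$ and $M''|(B\cup E(N))=M'_2$, so in particular $M''|A=M|A$ and $M''|B=M|B$. In the glued ambient $\PG$, $\cl(X)$ lies in the $M'_1$-ambient and $\cl(Y)$ in the $M'_2$-ambient for $X\subseteq A,\,Y\subseteq B$, so $\cl(X)\cap\cl(Y)\subseteq E(N)$ and reduces to $(\cl_{M'_1}(X)\cap E(N))\cap(\cl_{M'_2}(Y)\cap E(N))$. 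Compatibility then gives $\sqcap_{M''}(X,Y) = \pi((P_X,P_Y)) = \sqcap_M(X,Y)$, which together with $M''|A=M|A$ and $M''|B=M|B$ forces $M''|E(M)=M$ via the rank formula $r(Z) = r(Z\cap A)+r(Z\cap B) - \sqcap(Z\cap A,Z\cap B)$.

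The main delicacy is the alignment and gluing step in the reverse direction: one must precisely embed the two ambient $\PG$s into a single common $\PG$ whose intersection is exactly the flat spanned by $E(N)$, using the standard uniqueness of the $\bF$-representation of $\PG(k-2,\bF)$. Granted that setup, the rest of the argument is a short modular rank computation relying only on realizability and compatibility.
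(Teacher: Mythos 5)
The paper offers no proof here — it simply asserts that the lemma ``follows directly from these definitions.'' Your write-up supplies the argument that the authors are clearly alluding to, and it is correct: the forward direction identifies the guts flat $N_0=\cl(A)\cap\cl(B)$ in a fixed representation with $N=\PG(k-2,\bF)$ and reads off the two (minimal) schemes, with compatibility following from modularity of the ambient projective geometry; the reverse direction performs a generalized parallel connection across the modular flat $N$ and then verifies $M''|E(M)=M$ by the trivial identity $r(Z)=r(Z\cap A)+r(Z\cap B)-\sqcap(Z\cap A,Z\cap B)$ together with the compatibility hypothesis and another modularity computation inside the glued ambient geometry.

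One technicality worth spelling out in the gluing step: ``unique up to projective equivalence'' for representations of $\PG(k-2,\bF)$ should be read over $P\Gamma L$ rather than $PGL$ when $\bF$ is not a prime field, since the coordinate change matching $\rho_1|E(N)$ to $\rho_2|E(N)$ may a priori be semilinear. This is harmless — one first applies the relevant field automorphism coordinatewise to all of $\rho_2$ (this preserves linear dependence, hence gives another $\bF$-representation of $M'_2$), after which the alignment is honestly linear and the pushout of ambient spaces is well defined. With that amendment noted, your proof is complete and is exactly the intended one.
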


\begin{proof}
Suppose that $M$ is an $\bF$-represented matroid.
Let $N$ be a maximal simple restriction of
$S_M|(\Span(A)\cup \Span(B))$; thus 
$N$ is isomorphic to $\PG(k-2,\bF)$.
For each $P\in\cP(M,A)$ we let $\sigma_1(P)$ denote the
set of flats $\Span(X)\cap E(N)$ taken over all $X\in P$.
Similarly, for
each $P\in\cP(M,B)$ we let $\sigma_2(P)$ denote the
set of flats $\Span(X)\cap E(N)$ taken over all $X\in P$.
By definition, $\sigma_1$ is a realizable scheme for
$(A,B)$ and $\sigma_2$ is a realizable scheme for $(B,A)$.
By Lemmas~\ref{motivation1} and~\ref{motivation2},
$\sigma_1$ and $\sigma_2$ are compatible.

Conversely, suppose that
there exist realizable schemes $\sigma_1$ 
for $(A,B)$ and $\sigma_2$ for $(B,A)$ that are compatible.
Thus there are $\bF$-represented matroids 
$M_A$ and $M_B$ such that
\begin{itemize}
\item $E(M_A) = A\cup E(N)$, $M_A|A = M|A$, and $M_A|E(N) = N$, 
\item for each $P\in \cP(M,A)$ and $X\in P$,
$\cl_{M_A}(X)\cap E(N) \in \sigma_1(P)$,
\item $E(M_B) = B\cup E(N)$, $M_B|B = M|B$, and $M_B|E(N) = N$, 
\item for each $P\in \cP(M,B)$ and $X\in P$,
$\cl_{M_B}(X)\cap E(N) \in \sigma_2(P)$.
\end{itemize}
Now let $M'$ be the $\bF$-represented matroid
on ground set $E(M_A)\cup E(M_B)$ such that
$M'|{E(M_A)} = M_A$, $M'|{E(M_B)} = M_B$, and
$\Span(E(M_A))\cap \Span(E(M_B)) = \Span(E(N))$.
By the definition of compatible and
Lemma~\ref{motivation1},
$M'\del E(N)$ is a representation of $M$.
\end{proof}

\section{Nested $k$-separations}

A {\em $k$-dissection of length $t$} of a matroid $M$ is an
ordered partition $(A_0,A_1,\ldots,A_t)$
of $E(M)$ into nonempty sets such that,
for each $i\in \{1,\ldots,t\}$,
$(A_0\cup\cdots\cup A_{i-1},A_i\cup\cdots\cup A_t)$
is a $k+1$-separation. Thus, $M$ has a $k$-dissection of 
length $t$ if and only if $M$ has $t$ nested $k+1$-separations.
For brevity, we will write $A[i,j]$ for $A_i\cup\cdots\cup A_j$.
We say that a $k$-dissection $(X_0,\ldots,X_t)$ {\em contains}
a $k$-dissection $(Y_0,\ldots,Y_s)$ if 
there is an increasing function $f:\{0,\ldots,s\}
\rightarrow\{0,\ldots,t\}$ with $f(0)=0$ such that
for each $i\in\{0,\ldots,s-1\}$,
$Y_{i} = X[f(i),f(i+1)-1]$.
A $k$-dissection $(X_0,\ldots,X_t)$ is {\em linked} if
$\kappa_{M}(X_0,X_t) = k$.
Note that the set of linked $k$-dissections 
is closed under containment.
\begin{lemma}\label{linked}
If a matroid has a $k$-dissection of length $n^{k+1}$,
then it has a linked $l$-dissection of length $n$
for some $l\le k$.
\end{lemma}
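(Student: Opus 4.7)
I plan to induct on $k$. The base case $k=0$ is immediate: every breakpoint of a $0$-dissection has $\lambda_M(A[0,i-1])=0$, which forces $\kappa_M(A_0,A_t)=0$, so the dissection is already linked, and Lemma~\ref{seqcon} lets me coarsen to length $n$.

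For the inductive step, assume the result for $k-1$ with bound $n^k$. Given a $k$-dissection $(A_0,\ldots,A_t)$ with $t=n^{k+1}$, I classify each breakpoint $i$ as \emph{light} if $\lambda_M(A[0,i-1])\le k-1$ and \emph{heavy} if $\lambda_M(A[0,i-1])=k$. If at least $n^k$ breakpoints are light, the sub-dissection on them is a $(k-1)$-dissection of length $\ge n^k$, and the inductive hypothesis finishes. Otherwise more than $n^{k+1}-n^k$ breakpoints are heavy; I pass to the heavy sub-dissection $\cD'$, in which every breakpoint-$\lambda$ equals $k$. If $\kappa_M(A'_0,A'_m)=k$ for the endpoints of $\cD'$, then $\cD'$ is a linked $k$-dissection and I coarsen to length $n$ by Lemma~\ref{seqcon}.

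The hard case is that $\cD'$ has all breakpoint-$\lambda$'s equal to $k$ but $\ell:=\kappa_M(A'_0,A'_m)<k$. Take a tight witness $Z$ with $\lambda_M(Z)=\ell$ and $A'_0\subseteq Z\subseteq E(M)-A'_m$. For each breakpoint $i$ of $\cD'$, submodularity of $\lambda_M$ gives
\[
\lambda_M(A[0,i-1]\cap Z)+\lambda_M(A[0,i-1]\cup Z)\le k+\ell\le 2k-1,
\]
so at least one of these two auxiliary sets has connectivity at most $k-1$. The two families $\{A[0,i-1]\cap Z\}_i$ and $\{A[0,i-1]\cup Z\}_i$ are each nested and meet at $Z$, so together they concatenate into a single nested chain from $A'_0$ through $Z$ to $E(M)-A'_m$. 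For every consecutive pair of breakpoints of $\cD'$, the corresponding non-empty $\cD'$-part is either not contained in $Z$ (making the $\cup$-chain strictly advance) or intersects $Z$ (making the $\cap$-chain strictly advance); hence the combined chain contains many distinct sets. Selecting one low-$\lambda$ representative per breakpoint produces a nested chain of at least $n^k$ distinct sets of connectivity at most $k-1$, which cut $E(M)$ into a $(k-1)$-dissection of length at least $n^k$, and the inductive hypothesis applies.

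The main obstacle is the combinatorial book-keeping in the hard case: one must verify that the chain of selected low-$\lambda$ sets really yields a $(k-1)$-dissection of length at least $n^k$ after accounting for coincidences when some $\cD'$-parts are all either disjoint from or contained in $Z$. I expect Lemma~\ref{seq} to be useful for gluing pieces together and for checking that the $\kappa$-tightness of the linked dissection returned by the inductive hypothesis is preserved when re-coarsening back to length $n$.
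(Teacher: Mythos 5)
Your opening moves (induction on $k$, splitting breakpoints into light and heavy, passing to the all-heavy coarsening, and coarsening a linked dissection down to length $n$) coincide with the paper's proof. The divergence, and the gap, is in your hard case. From a single witness $Z$ with $A'_0\subseteq Z\subseteq E(M)-A'_m$ and $\lambda_M(Z)=\ell<k$, the step ``selecting one low-$\lambda$ representative per breakpoint produces a nested chain of at least $n^k$ distinct sets'' does not follow. Submodularity only tells you that at each breakpoint one of $A'[0,i-1]\cap Z$, $A'[0,i-1]\cup Z$ has $\lambda_M\le k-1$, but the low one may always be the set on the side of the chain that is \emph{not} advancing. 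Concretely, suppose $Z\subseteq A'_0\cup A'_1$ is a ``pocket'' at the start of the chain: then for every breakpoint $i\ge 2$ the intersection equals $Z$ itself, which already has $\lambda_M=\ell$, so submodularity imposes no constraint on $\lambda_M(A'[0,i-1]\cup Z)$, and all of those unions may have $\lambda_M=k$. Your construction then yields essentially one low-$\lambda$ set, not $n^k$. This is not book-keeping to be tidied up: in such a situation the matroid need not admit \emph{any} long $(k-1)$-dissection, so no refinement of the uncrossing argument with this single global witness can reach the inductive hypothesis; the correct conclusion there is a linked $k$-dissection, and it lives on a proper sub-segment of the chain, which your plan never looks for because you only ever test $\kappa_M$ between the two extreme parts $A'_0$ and $A'_m$.

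The missing idea, which is the heart of the paper's proof, is to test linkedness \emph{locally}: for each index $i$ one asks whether $\kappa_M(A[0,i-1],A[i+1,t])=k$, i.e.\ linkedness across the single part $A_i$. An index failing this supplies its own witness $Z_i$ with $A[0,i-1]\subseteq Z_i\subseteq A[0,i]$ and $\lambda_M(Z_i)\le k-1$; since every breakpoint has $\lambda_M$ exactly $k$, these witnesses are pairwise distinct and nested, so by the inductive assumption there are fewer than $n^k$ failing indices. Pigeonholing on the roughly $n^{k+1}-n^k$ heavy breakpoints then produces $n$ consecutive non-failing indices, and repeated application of Lemma~\ref{seq} chains these local certificates together to show that $(A[0,i],A_{i+1},\ldots,A_{i+n},A[i+n+1,t])$ is a linked $k$-dissection of length at least $n$. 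So the quantity you should be controlling by induction is the number of indices where the local $\kappa_M$ drops, not the behaviour of one witness for the global $\kappa_M$; without that, the hard case of your plan cannot be completed.
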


\begin{proof}
Let $M$ be a matroid that admits a $k$-dissection
$(A_0,A_1,\ldots,A_t)$ with $t=n^{k+1}$.
Inductively we may assume that $M$ does not admit a
$(k-1)$-dissection of length $n^k$.
For each $i\in\{0,\ldots,n^k-1\}$, we may assume
that $\kappa_M(A[0,i n],A[(i+1)n,t])<k$ since otherwise
$(A[0,i n], A(i n+1), A(i n+2),\ldots, A[(i+1)n -1,A[(i+1)n,t])$ is a linked $k$-dissection of length $n$.
So there is a partition $(B_{i},C_{i+1})$ of
$A[i n+1, (i+1)n-1]$ such that $\lambda_M(A[0,i n]\cup B_i)<k$.
But then $(A_0\cup B_0, C_1\cup A_{n}\cup B_1,
 C_2\cup A_{2n}\cup B_2, \ldots, C_{n^k-1}\cup A_{(n^k-1)n}\cup B_{n^k-1}, C_{n^k}A_t)$ is a $k-1$-dissection of length $n^k$, contrary to 
our choice of $(A_0,A_1,\ldots,A_t)$.
\end{proof}

In the remainder of the paper, we will
be dealing with a linked $k$-dissection
$(A_0,\ldots,A_t)$ in a matroid $M$.
By Tutte's Linking Theorem we will find 
a partition $(C,D)$ of $A[1,\ldots,t-1]$
such that $\sqcap_{M\con C}(A_0,A_t) = k$.
For $X\subseteq C\cup D$ we will
denote $M\del (D\cap X)\con (C\cap X)$ by $M\circ X$.

We need the following technical result on dissections
of representable matroids.
\begin{lemma}\label{technical1}
Let $\bF$ be a finite field of order $q$ and
let $a,b,k\in \bN$ with $a \ge b \tower(q,k,2)^2$.
Now let $(A_0,A_1,\ldots,A_a)$
be a linked $k$-dissection of an $\bF$-represented
matroid $M$ and let $(C,D)$ be a partition of
$A[1,a-1]$ such that $\sqcap_{M\con C}(A_0,A_a) = k$.
Then  $(A_0,A_1,\ldots,A_a)$ contains a
$k$-dissection $(B_0,B_1,\ldots,B_b)$ 
such that, for each $1\le i< j\le b$, 
\begin{eqnarray*}
\cP(M\circ B[i,j-1], B[0,i-1])
&=&\cP(M, B[0,i-1]) \\
\cP(M\circ B[i,j-1], B[j,b])
&=&\cP(M, B[j,b]), \\
|\cP(M,B[0,i-1])| &=& |\cP(M,B[0,j-1])|,\mbox{ and} \\
|\cP(M,B[i,b])| &=& |\cP(M,B[j,b])|. 
\end{eqnarray*}
\end{lemma}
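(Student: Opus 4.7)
The plan combines a monotonicity observation with a double-pigeonhole argument.

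\emph{Monotonicity.} First, for each fixed $i$ I will show that the sequence $j \mapsto \cP(M \circ A[i,j-1], A[0,i-1])$ on $\{i, i+1, \ldots, a\}$ is a refining chain whose finest element is $\cP(M, A[0,i-1])$. The key calculation is the identity
\[ \sqcap_{M \circ X}(Y,Z) \;=\; \sqcap_M(Y \cup C_X, Z \cup C_X) \;-\; r_M(C_X), \]
where $C_X := C \cap X$ and $Y, Z$ are subsets of $E(M \circ X)$. Using this, one verifies directly that if $X_1, X_2 \subseteq A[0,i-1]$ are $\cP(M, A[0,i-1])$-equivalent, then they remain equivalent in $\cP(M \circ A[i,j-1], A[0,i-1])$, since equivalence in $M$ gives $r_M(X_\ell \cup W) - r_M(X_\ell)$ independent of $\ell$ for all $W \subseteq A[i, a]$, and one takes $W = C_X$ and $W = C_X \cup Z$. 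The analogous statement holds on the right: $\cP(M \circ A[i,j-1], A[j,a])$ is refining as $i$ grows.

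\emph{Squeeze reduction.} Given indices $0 = f(0) < f(1) < \cdots < f(b) \le a$, the left-hand conclusion of the lemma, for all $1 \le i < j \le b$, reduces by the refining-chain structure (a refining chain with matching endpoints is constant) to the endpoint equalities at $j = b$, which in turn reduce, by Lemma~\ref{classes1}, to the integer equalities of class-counts
\[ \bigl|\cP(M \circ A[f(l), f(b)-1], A[0,f(l)-1])\bigr| \;=\; \bigl|\cP(M, A[0,f(l)-1])\bigr| \]
for each $l \in \{1, \ldots, b-1\}$. A symmetric reduction handles the right-hand equality.

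\emph{Pigeonhole.} By Lemma~\ref{classes1}, each partition has at most $T := \tower(q,k,k)$ equivalence classes, so each of our monotone integer chains takes at most $T$ distinct values. Using the hypothesis $a \ge bT^2$, I assign to each index a ``type'' in $\{1,\ldots,T\}^2$ recording the relevant left and right class-count data, and extract a subsequence of $b+1$ indices on which both monotone chains are constant throughout the required sub-ranges. Setting $B_l := A[f(l), f(l+1)-1]$ for $l < b$ (with $B_b := A[f(b), a]$) and invoking the squeeze completes the proof.

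\emph{Main obstacle.} The crux is the pigeonhole step. Because partitions at different positions live over different ground sets, they cannot be compared directly; the fix is to reduce first to integer class-counts via the monotone-chain structure, at which point the combinatorial pigeonhole becomes straightforward. The factor $T^2$ (rather than $T$) in the hypothesis reflects the need to stabilize the left-hand and right-hand chains simultaneously.
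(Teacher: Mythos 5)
Your monotonicity step and the squeeze are sound: the identity $\sqcap_{M\con C'}(X,Z)=\sqcap_M(X,Z\cup C')-\sqcap_M(X,C')$ with $C'=C\cap A[i,j-1]$ does show that $\cP(M,A[0,i-1])$ refines $\cP(M\circ A[i,j-1],A[0,i-1])$ and that these partitions only coarsen as $j$ grows, so the desired equalities would indeed follow from equality of class-counts at the extreme ends. The gap is in the pigeonhole, which you yourself call the crux but state in one sentence. The quantities you propose to pigeonhole on are per-index class-counts such as $|\cP(M,A[0,i-1])|$ and $|\cP(M,A[i,a])|$; but these are counts of partitions of different power sets at different positions, and equality of such counts at two indices $i<j$ carries no information about whether $M\circ A[i,j-1]$ preserves $\cP(M,A[0,i-1])$. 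What you actually need is constancy of the pair-indexed chains $j\mapsto|\cP(M\circ A[i,j-1],A[0,i-1])|$ along the chosen subsequence, and no assignment of a type from $\{1,\ldots,T\}^2$ to single indices can force that: monotonicity and the bound $T$ are consistent with every chain dropping immediately after its starting index (say from $2$ to $1$), in which case no subsequence of length at least $3$ has the property you want, no matter how large $a$ is. Ruling out this scenario is precisely the content of the lemma, and it cannot come from counting equivalence classes alone.

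The paper supplies the missing ingredient from the representation itself. Fix an embedding of $M$ in $N=\PG(r(M)-1,\bF)$ and let $N_i$ be the guts geometry $N|(\cl_N(A[0,i-1])\cap\cl_N(A[i,a]))\cong\PG(k-1,\bF)$; let $L_i$ (resp.\ $R_i$) be the set of flats of $N_i$ of the form $\cl_N(X)\cap E(N_i)$ with $X\subseteq A[0,i-1]$ (resp.\ $X\subseteq A[i,a]$). Equivalence of $X_1,X_2\subseteq A[0,i-1]$ is decided by the ranks $r_N(F'\cap F_\ell)$ over the test flats $F'\in R_i$, and the pigeonhole is applied to the pairs $(|L_i|,|R_i|)$, which are genuinely per-index data. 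The point is that for $i<j$ with $|R_i|=|R_j|$, contracting/deleting $A[i,j-1]$ puts $N_i$ and $N_j$ in parallel, inducing an injection $R_i\to R_j$ which is then a bijection; hence the family of test flats available after passing to $M\circ A[i,j-1]$ is the same as before, and the equivalence relation does not coarsen, for every pair from the chosen set simultaneously. So the fix is not to count equivalence classes but to count realized guts flats in a fixed representation; your framework (monotone coarsening plus squeeze) is compatible with this, but without such representation-based per-index data certifying pairwise preservation, the extraction step in your proposal does not go through.
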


\begin{proof}
For each $i\in\{1,\ldots,a\}$ let 
$N_i$ be a maximal simple restriction of
$S_M| (\Span(A[0,i-1])\cap\Span(A[i,a]))$;
thus $N_i\cong \PG(k-1,\bF)$.
Consider a pair $(i,j)$ with $1\le i < j\le a$.
By Lemma~\ref{seqcon},
$\lambda_{M\circ A[i,j-1]}(A[0,i-1]) = k$.
Now consider the restriction $M_{ij}$ of $S_M$
to $E(M)\cup E(N_i)\cup E(N_j)$.
Now $N_i$ and $N_j$ are both restrictions of 
$M_{ij}\circ A[i,j-1]$ and each element of $N_i$ is
in parallel with a unique element of $N_j$;
let $\phi_{ij}$ denote the associated bijection.

For each $i\in\{1,\ldots,a\}$,
let $L_i$ denote the set
of flats $F$ of $N_i$ such that there exists
$X\subseteq A[0,i-1]$ with $\cl_N(X)\cap E(N_i) = F$ and let
$R_i$ denote the set
of flats $F$ of $N_i$ such that there exists
$X\subseteq A[i,a]$ with $\cl_N(X)\cap E(N_i) = F$.
By Lemma~\ref{flats}, $1\le |L_i|\le \tower(q,k,2)$
and $1\le |R_i|\le \tower(q,k,2)$.
So there exists $Z\subseteq \{1,\ldots,a\}$ with $|Z| = b$ and
integers $l,r\in \{1,\ldots,\tower(q,k,2)\}$ such that
$|L_i| = l$ and $|R_i|=r$ for each $i\in Z$.

Consider $i, j\in Z$ with $j>i$ and sets $X_1,X_2\subseteq A[0,i-1]$.
Let $F_1 = \cl_N(X_1)\cap E(N_i)$ and
$F_2 = \cl_N(X_2)\cap E(N_i)$. Note that
$F_1,F_2\in L_i$. Moreover $X_1$ and $X_2$ are not equivalent
if and only if there exists $F'\in R_i$ such that
$r_{S_M}(F'\cap F_1)\neq r_{S_M}(F'\cap F_2)$.
For each $R\in R_i$, we have $\phi_{ij}(R)\in R_j$;
so, since $|R_i|=|R_j|$,
$\phi_{ij}$ defines a bijection between $R_i$ and $R_j$.
So $X_1$ and $X_2$ are equivalent in $M$ if and only
if they are equivalent in $M\circ A[i,j-1]$.
Thus $\cP(M,A[0,i-1]) = \cP(M\circ A[i,j],A[0,i-1])$.
By symmetry,
$\cP(M,A[j,a]) = \cP(M\circ A[i,j-1],A[j,a])$.
Similarly, by Lemma~\ref{equivalentpairs}, 
\begin{eqnarray*}
|\cP(M,B[0,i-1])| &=& |\cP(M,B[0,j-1])|,\mbox{ and} \\
|\cP(M,B[i,b])| &=& |\cP(M,B[j,b])|. 
\end{eqnarray*}

Now the result holds by taking the $k$-dissection
corresponding to $Z$.
\end{proof}

\section{Excluded minors}

For a set $\cF$ of fields, we call a matroid 
{\em $\cF$-representable} 
if $M$ is $\bF$-representable for some $\bF\in \cF$.
Our main result is an extension of
Theorem~\ref{main} to $\cF$-representability.
To make the sums easier, it is convenient
to consider $2$-separations separately;
the following result is routine, we leave the proof to the reader.
\begin{lemma}\label{2seps}
Let $\cF$ be a finite set of fields.
Then each excluded minor for the class of
$\cF$-representable matroids has at most
$|\cF|-1$ nested $2$-separations.
\end{lemma}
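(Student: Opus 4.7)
The plan is to decompose $M$ along the chain of nested 2-separations into elementary pieces, and then apply a Helly-type counting argument to the fields over which each piece is representable.

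Write $A_0=\emptyset$, $A_{n+1}=E(M)$, and $S_i=A_i\setminus A_{i-1}$. Using the 2-separations one at a time, decompose $M$ as a linear chain of 2-sums (with a direct sum at positions where $\lambda_M(A_i)=0$), obtaining pieces $P_1,\dots,P_{n+1}$, where $P_i$ is supported on $S_i$ together with up to two basepoints (one for each adjacent exact 2-separation). If some interior $P_i$ has its two basepoints in distinct connected components of $P_i$, then $M$ has a $\lambda=0$ separation strictly between $A_{i-1}$ and $A_i$ and the nested sequence extends; such refinements only increase $n$, so we may assume every interior piece has a circuit through both of its basepoints. Each $P_i$ is isomorphic to a proper minor of $M$ obtained by collapsing the other pieces to single basepoint elements (contracting the interior of a circuit through basepoints in each neighbour and deleting everything else). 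Since $M$ is an excluded minor, each $P_i$ is $\cF$-representable, and we set $\cF_i := \{\bF\in\cF : P_i \text{ is } \bF\text{-representable}\}\neq\emptyset$. Because representability of a 2-sum (or direct sum) over a fixed $\bF$ is equivalent to representability of the summands, $M$ is $\bF$-representable iff every $P_i$ is; hence $\bigcap_{i=1}^{n+1}\cF_i=\emptyset$.

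For each interior index $i\in\{2,\dots,n\}$, take a circuit of $P_i$ through both basepoints, contract its interior inside $M$, and delete the remaining elements of $S_i$. The effect on the 2-sum chain is to replace $P_i$ by a parallel pair of its two basepoints, a trivial summand that merely identifies the two adjacent basepoints. The resulting proper minor $M^{(i)}$ of $M$ has 2-sum decomposition $P_1\oplus_2\cdots\oplus_2 P_{i-1}\oplus_2 P_{i+1}\oplus_2\cdots\oplus_2 P_{n+1}$ with every surviving $P_j$ structurally unchanged, so $M^{(i)}$ is $\bF$-representable iff $\bF\in\bigcap_{j\neq i}\cF_j$. Being a proper minor of the excluded minor $M$, $M^{(i)}$ is $\cF$-representable, hence $\bigcap_{j\neq i}\cF_j\neq\emptyset$. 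Now, for each $i\in\{2,\dots,n\}$, choose $\bF_i\in\bigcap_{j\neq i}\cF_j$. Since $\bigcap_j\cF_j=\emptyset$ and $\bF_i\in\cF_j$ for all $j\neq i$, necessarily $\bF_i\notin\cF_i$. If $\bF_i=\bF_{i'}$ for distinct $i,i'\in\{2,\dots,n\}$, then applying the previous relation with $j=i'\neq i$ gives $\bF_i\in\cF_{i'}$, contradicting $\bF_{i'}\notin\cF_{i'}$. So $\bF_2,\dots,\bF_n$ are $n-1$ distinct elements of $\cF$, giving $n-1\le|\cF|$ and therefore $n\le|\cF|+1$.

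The main obstacle is the removal step: one must verify that contracting-and-deleting inside $S_i$ really leaves a 2-sum chain whose surviving pieces are exactly $P_1,\dots,P_{i-1},P_{i+1},\dots,P_{n+1}$ with their structure intact. This is where the reduction ensuring each interior $P_i$ has a circuit through both of its basepoints (obtained by refining along $\lambda=0$ breaks) is essential. The two end pieces $P_1$ and $P_{n+1}$ do not yield analogous constraints — removing an end piece destroys the basepoint coupling it to its neighbour and thereby enlarges that neighbour's representability class — which is exactly why the bound is $|\cF|+1$ rather than $|\cF|-1$.
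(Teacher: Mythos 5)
Your chain-of-2-sums decomposition and the Helly-type counting are the natural instincts here (the paper offers no proof of this lemma to compare against -- it is asserted as routine), but there is a genuine gap at the $\lambda=0$ junctions, and it cannot be patched to give the stated constant. Your removal argument only makes sense for an interior piece $P_i$ carrying two basepoints. A piece flanked on one side by a $1$-separation has at most one basepoint: there is no circuit ``through both basepoints'' to contract, and no proper minor of $M$ realizes the chain with $P_i$ deleted -- removing such a piece destroys the basepoint coupling of its neighbour exactly as you yourself observe for the two end pieces. For such an index the crucial claim $\bigcap_{j\neq i}\cF_j\neq\emptyset$ simply fails. Moreover your refinement step does not avoid this case, it creates it: refining at a $\lambda=0$ separation inside a disconnected interior piece replaces one two-basepoint piece by two pieces each incident with a direct-sum junction. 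And such junctions genuinely occur, because excluded minors for $\cF$-representability need not be connected when $|\cF|\ge 2$.

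In fact, with the paper's permissive definition of a $2$-separation (singleton sides and $\lambda=0$ are allowed), the statement you are trying to prove is false, so no argument can close this gap. Take $\cF=\{\GF(2),\GF(3)\}$ and $M=U_{2,4}\oplus F_7$. Every proper minor of $F_7$ is ternary (e.g.\ $F_7\del e\cong M(K_4)$, and $F_7\con e$ is a rank-$2$ matroid with three parallel pairs) and every proper minor of $U_{2,4}$ is binary, so every proper minor of $M$ is binary or ternary, while $M$ itself is neither; hence $M$ is an excluded minor for $\cF$-representability. Writing $X=\{x_1,\ldots,x_4\}$ for the $U_{2,4}$ part and $y_1,\ldots,y_7$ for the $F_7$ part, the chain
$$\{x_1\}\subset\{x_1,x_2,x_3\}\subset X\subset X\cup\{y_1\}\subset E(M)-\{y_7\}$$
gives five nested $2$-separations, whereas $|\cF|+1=3$; note that the troublesome pieces in your decomposition of this chain are exactly the one-basepoint pieces $\{x_4\}$ and $\{y_1\}$ sitting beside the direct-sum junction at $X$. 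What your scheme does prove, if you assign fields only to interior pieces that have two basepoints and count components and the trivial separations at the ends of each connected block separately, is a weaker bound of the form $c|\cF|$ (say $3|\cF|$), and a bound of that shape is all that the application in the proof of Theorem~\ref{main2} actually requires.
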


The following technical result extends
Lemma~\ref{technical1} to excluded minors.
\begin{lemma}\label{technical2}
Let $\cF$ be a set of finite fields each of order $\le q$ and
let $a,n,k\in \bN$ with $a\ge 3$, $k\ge 2$, and
$n \ge a |\cF|\tower(q,k,6)$.
Now let $M$ be an excluded minor for the class of
$\cF$-representable matroids.
If $M$ has a linked $k$-dissection of length $n$, then
there exists
a linked $k$-dissection $(A_0,A_1,\ldots,A_a)$ of $M$,
integers $l,r\in \bN$,
a partition $(C,D)$ of $A[1,a-1]$, and
a field $\bF\in \cF$ such that
\begin{itemize}
\item
$\sqcap_{M\con C}(A_0,A_a) = k$;
\item
$M\circ A_i$ is $\bF$-representable for each $i\in \{1,\ldots,a-1\}$;
\item
for each $1\le i< j< a$, 
\begin{eqnarray*}
\cP(M\circ A[i,j], A[0,i-1]) &=&\cP(M, A[0,i-1]) \mbox{ and}\\
\cP(M\circ A[i,j], A[j+1,a]) &=&\cP(M, A[j+1,a]); \mbox{ and}
\end{eqnarray*}
\item for each $1\le i\le a$, 
\begin{eqnarray*}
|\cP(M, A[0,i-1])| &=& l \mbox{ and}\\
|\cP(M, A[i,a])| &=& r.
\end{eqnarray*}
\end{itemize}
\end{lemma}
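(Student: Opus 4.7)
Let $(X_0, X_1, \ldots, X_n)$ denote the given linked $k$-dissection of $M$. The plan is to apply Tutte's Linking Theorem and then three nested pigeonhole arguments to extract the desired sub-dissection. First, Tutte's Linking Theorem yields an independent $C \subseteq X[1,n-1]$ with $\sqcap_{M\con C}(X_0, X_n) = k$; set $D = X[1,n-1] - C$, which furnishes the first bullet. Since $M$ is an excluded minor, every proper minor $M \circ Y$ for nonempty $Y \subseteq X[1,n-1]$ is $\cF$-representable. For each $i \in \{1,\ldots,n-1\}$ pick $\bF_i \in \cF$ witnessing representability of $M \circ X_i$, and pigeonhole on $\cF$ to obtain a common field $\bF$ and a subset $J_1$ of size at least $(n-1)/|\cF|$. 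The crucial observation is that whenever $A \supseteq X_i$ for some $i \in J_1$, the minor $M \circ A$ is obtained from $M \circ X_i$ by additional deletions and contractions, hence is also $\bF$-representable; so as long as each middle block of our final dissection contains an index from $J_1$, the second bullet is automatic.

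Next I would pigeonhole on the pair $(|\cP(M, X[0,i-1])|, |\cP(M, X[i,n])|)$ over $i \in J_1$. Lemma~\ref{classes1} does not apply directly to $M$, but for any $e$ outside a given set $Y$ the minors $M \del e$ and $M \con e$ are $\cF$-representable, and combining Lemma~\ref{classes2} with Lemma~\ref{classes1} gives $|\cP(M, Y)| \leq \tower(q, k-1, 2)^2$. Pigeonholing on the resulting integer pair produces $J_2 \subseteq J_1$ on which the counts stabilise to constants $l$ and $r$. To secure the $\cP$-preservation properties of the third bullet, I would adapt the proof of Lemma~\ref{technical1}: since $M$ is not $\bF$-representable, the flat-counting pigeonhole is carried out inside an $\bF$-representable proper minor such as $M \circ X_{i_0}$ for some fixed $i_0 \in J_2$, whose induced dissection still has large length. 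The constant counts $l$ and $r$, together with Lemma~\ref{classes2}, allow one to transfer the $\cP$-equalities from the representable minor back to $M$.

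The final step is to select $a-1$ indices $i_1 < \cdots < i_{a-1}$ from the surviving subset and form the sub-dissection $(A_0, A_1, \ldots, A_a)$, taking each middle block $A_j$ to be an interval of the original dissection containing $i_j$; linkedness follows from Lemma~\ref{seqcon}. The main obstacle I anticipate is the $\cP$-preservation step: Lemma~\ref{technical1} relies on the ambient projective geometry of a representable matroid, which is unavailable for $M$, so one must lift the flat-counting argument to a representable proper minor and carefully track how equivalence classes correspond between $M$ and its minors. The numerical bound $n \geq a|\cF|\tower(q, q+k, k+3)$ then absorbs the four losses: a factor $|\cF|$ for the field pigeonhole, roughly $\tower(q, k-1, 2)^4$ for the $(l,r)$ pigeonhole, roughly $\tower(q, k, k)^2$ for the adaptation of Lemma~\ref{technical1}, and a factor $a$ for the length of the extracted dissection.
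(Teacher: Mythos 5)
Your outline matches the paper up to a point (Tutte's Linking Theorem for $(C,D)$, the majority/pigeonhole on $\cF$ for the middle blocks, and a final pigeonhole to equalize the class counts $l,r$ — the paper does all of this too, and your bound $|\cP(M,Y)|\le\tower(q,k-1,2)^2$ via Lemmas~\ref{classes2} and~\ref{classes1} is a legitimate substitute for the paper's count bound). The genuine gap is exactly at the step you flag as the main obstacle, and your proposed fix does not work. You suggest running the flat-counting argument of Lemma~\ref{technical1} inside a representable proper minor such as $M\circ X_{i_0}$ and then ``transferring'' the resulting $\cP$-equalities back to $M$ using the constant counts and Lemma~\ref{classes2}. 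But Lemma~\ref{classes2} only relates $\cP(M,A)$ to the \emph{pair} $\cP(M\del e,A)$, $\cP(M\con e,A)$ for a \emph{single} element $e$ on the $B$-side: it shows $\cP(M,A)$ is the common refinement of these two, so iterating it gives only that $\cP(M,A)$ refines $\cP(M\circ Y,A)$ for $Y\subseteq B$ — the wrong direction for what you need. From $\cP(M\circ X_{i_0},A)=\cP((M\circ X_{i_0})\circ Y,A)$ you cannot conclude $\cP(M,A)=\cP(M\circ A[i,j],A)$, and the constants $l,r$ do not help: they are cardinalities of partitions in $M$ itself, so they provide no lower bound on $|\cP(M\circ A[i,j],A[0,i-1])|$, which is what would be needed to upgrade the refinement to an equality. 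Nor can you place $X_{i_0}$ on the $A$-side, since then the minor's classes partition subsets of a strictly smaller set and are incomparable with $\cP(M,A)$.

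The paper's mechanism is different and is the heart of the proof: because $M$ is an excluded minor it is simple and cosimple, and the dissection is first arranged so that the two end blocks have at least $2q^k$ elements. This guarantees single elements $e$ in the first block and $f$ in the last block lying in neither the closure nor the coclosure of the complementary side, so the \emph{same} linked $k$-dissection survives in all four proper minors $M\del e$, $M\con e$, $M\del f$, $M\con f$, each $\cF$-representable. Lemma~\ref{technical1} is applied to these four matroids in turn (successively refining one dissection that works for all of them), and only then is Lemma~\ref{classes2} invoked, with $f$ for the left-side partitions and $e$ for the right-side ones: both $\cP(M,\,A[0,i-1])$ and $\cP(M\circ A[i,j],\,A[0,i-1])$ are common refinements of the corresponding partitions in the deletion and contraction of the single element, and since those agree by Lemma~\ref{technical1}, the equalities for $M$ follow. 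Without this single-element device (and the size condition on the end blocks that makes $e$ and $f$ available), the preservation bullet of the lemma is not established, so the proposal as written has a real hole at its central step.
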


\begin{proof}
Let $b=a\tower(q,k,2)^8$ and $c=|\cF|b$.
Since $k\ge 2$ and $q\ge 2$, it is a routine
calculation to check that
$n\ge c + 4q^k.$
So $M$ has  a linked $k$-dissection
$(C_0,\ldots,C_d)$ such that
$|C_0|\ge 2q^k$ and $|C_c|\ge 2q^k$.
By Tutte's Linking Theorem, there is a partition $(C,D)$ of
$C[1,c-1]$ such that $\sqcap_{M\con C}(C_0,C_c)=k$.
Since $M$ is an excluded-minor for the class of 
$\cF$-representable matroids, for each $i\in\{1,\ldots,c-1\}$
there exists a field $\bF_i\in \cF$ such that
$M\circ C_i$ is $\bF_i$-representable.
By majority, there exists a field
$\bF\in \cF$ and a linked $k$-dissection 
$(B_0,\ldots,B_b)$  of length $b$
contained in $(C_0,\ldots,C_c)$ such that
$M\circ B_i$ is $\bF$-representable for each $i\in\{1,\ldots,b-1\}$.

Note that $M$ is simple and cosimple.
So, for any $k+1$-separation $(X,Y)$ of 
$M$, we have $|X\cap\cl_M(Y)|\le q^k$ and
$|X\cap\cl^*(M)|\le q^k$.
Therefore there exists $e\in B_0$
that is in neither the closure nor the
coclosure of $B[1,b]$ and
there exists $f\in B_b$
that is in neither the closure nor the
coclosure of $B[0,b-1]$.
Now $(B_0-\{e\},B_1,\ldots,B_b)$ is a linked
$k$-dissection in both $M\del e$ and $M\con e$
and $(B_0,B_1,\ldots,B_b-\{f\})$ is a linked
$k$-dissection in both $M\del f$ and $M\con f$.
Moreover each of $M\del e$, $M\con e$, 
$M\del f$ and $M\con f$ is representable 
over some field in $\cF$.
We will now apply Lemma~\ref{technical1} to
each of the matroids $M\del e$, $M\con e$, 
$M\del f$, and $M\con f$ in turn to get increasingly coarser
$k$-dissections; let $(A_0,\ldots,A_a)$ be the final
$k$-dissection.
Then, by Lemma~\ref{classes2},
for each $1\le i< j< a$ we have 
\begin{eqnarray*}
\cP(M\circ A[i,j]), A[0,i-1]) &=&\cP(M, A[0,i-1]) \\
\cP(M\circ A[i,j]), A[j+1,a]) &=&\cP(M, A[j+1,a]) \\
|\cP(M,A[0,i-1])| &=& |\cP(M,A[0,j-1])|,\mbox{ and} \\
|\cP(M,A[i,a])| &=& |\cP(M,A[j,a])|. 
\end{eqnarray*}
Finally, let $l=|\cP(M,A_0)$ and $r=|\cP(M,A_a)$, and
replace $C$ and $D$ with $C\cap A[1,a-1]$ and $D\cap A[1,a-1]$.
\end{proof}

We can now prove our main result.
\begin{theorem}\label{main2}
Let $\cF$ be a finite set of finite fields each of size at most $q$,
let $k\in \bN$, 
and let $t= |\cF|^{k+1}\tower(q,q,k,6)$.
Then no excluded-minor for the class of $\cF$-representable matroids
admits a sequence of $t$ nested $k$-separations.
\end{theorem}

\begin{proof}
Let $M$ be an excluded-minor for the class of
$\cF$-representable matroids and suppose that
$M$ has $t$-nested $k$-separations.
By Lemma~\ref{2seps}, we may assume that $k\ge 2$.
Let $a = \tower(q,k,2)^{2\tower(q,k,2)} + 1$.
It is  a routine calculation to check that
$$ t \ge 
 \left( a|\cF| \tower(q,k,6) \right)^{k+1}.$$
Then, by Lemmas~\ref{linked} and~\ref{technical2},
there is a field $\bF\in\cF$, an integer $k'\le k$, a linked 
$k'$-dissection $(A_0,A_1,\ldots,A_a)$ of $M$,
integers $l,r\in\bN$,
and a partition $(D,C)$ of $E(M)-(A_0\cup A_a)$
such that:
\begin{itemize}
\item $\sqcap_{M\con C}(A_0,A_a) = k'$;
\item 
$M\circ A_i$ is $\bF$-representable for each $i\in \{1,\ldots,a-1\}$;
\item 
for each $1\le i< j< a$; 
\begin{eqnarray*}
\cP(M\circ A[i,j]), A[0,i-1]) &=&\cP(M, A[0,i-1]) \mbox{ and}\\
\cP(M\circ A[i,j]), A[j+1,a]) &=&\cP(M, A[j+1,a]); \mbox{ and}
\end{eqnarray*}
\item
for each $1\le i\le a$, 
\begin{eqnarray*}
|\cP(M, A[0,i-1])| &=& l \mbox{ and}\\
|\cP(M, A[i,b])| &=& r.
\end{eqnarray*}
\end{itemize}

Let $\cP(M,A_0) = \{\cL_1,\ldots,\cL_l\}$ 
and let $\cP(M,A_a)=\{\cR_1,\ldots,\cR_r\}$.
Now, for each $i\in\{0,\ldots,a-1\}$ 
and each $j\in \{1,\ldots,l\}$,
let $\cL_{ij}$ denote the equivalence class
of $\cP(M,A[0,i])$ that contains
$$ \{ L\cup (C\cap A[1,i])\, : \, L\in \cL_j \}.$$
Similarly, for each $i\in\{0,\ldots,a-1\}$ 
and each $j\in \{1,\ldots,r\}$,
let $\cR_{ij}$ denote the equivalence class
of $\cP(M,A[i+1,a])$ that contains
$$ \{ R\cup (C\cap A[i+1,a])\, : \, R\in \cR_j \}.$$

Recall that an $\bF$-scheme for
$(A[0,i],A[i+1,a])$ is a function
from $\cP(M,A[0,i])$ to $2^{\cF(k'-1,\bF)}$
and that $\cP(M,A[0,i]) = \{\cL_{i1},\ldots,\cL_{il}\}$.
Henceforth we will abuse notation by considering
an $\bF$-scheme  of $(A[0,i],A[i+1,a])$ as a function from
$\{1,\ldots,l\}$ to $2^{\cF(k'-1,\bF)}$.
Similarly, we will consider
an $\bF$-scheme  of $(A[i+1,a],A[0,i])$ as a function from
$\{1,\ldots,r\}$ to $2^{\cF(m-1,\bF)}$.
For each $i\in \{1,\ldots,a-1\}$,
let $\cS_L(i)$ denote the set of
all realizable $\bF$-schemes of
$(A[0,i],A[i+1,a])$ and 
let $\cS_R(i)$ denote the set of
all realizable $\bF$-schemes of
$(A[i+1,a],A[0,i])$. Since 
$M$ is not $\bF$-representable,
there are no compatible pairs
of $\bF$-schemes in $\cS_L(i)$ and
$\cS_R(i)$. 
By Lemma~\ref{classes1},
$l,r\le \tower(q,k,2)$. So, by Lemma~\ref{flats},
the number of pairs of functions
$(\sigma_L,\sigma_R)$ where
$\sigma_L:\{1,\ldots,l\}\rightarrow 2^{\cF(k'-1,\bF)}$
and
$\sigma_R:\{1,\ldots,r\}\rightarrow 2^{\cF(k'-1,\bF)}$
is at most
$$\tower(q,k,2)^{l+r} \le \tower(q,k,2)^{2\tower(q,k,2)}=a-1.$$
Therefore, there exists $0<i<j<a$ such that 
$\cS_L(i) = \cS_L(j)$ and $\cS_R(i)=\cS_R(j)$.
Then, by Lemma~\ref{majic}, $M\circ A[i,j-1]$
is not $\bF$-representable, which contradicts
our choice of $(A_0,\ldots,A_a)$.
\end{proof}

Note that Theorem~\ref{main} is an immediate corollary.

\section{Branch width}\label{sec:bw}

A tree is {\em cubic} if its internal vertices all have degree $3$.
The {\em leaves} of a tree are its degree-$1$ vertices.
Let $M$ be a matroid with $|M|\ge 2$.
A {\em branch-decomposition} of $M$ is a cubic tree $T$ whose
leaves are bijectively labelled by the elements of $M$.
If $T'$ is a subgraph of $T$
and $X\subseteq E(M)$ is the set of labels of $T'$,
then we say that $T'$ {\em displays} $X$.
The {\em width} of an edge $e$ of $T$ is defined to be
$\lambda_M(X)+1$ where $X$ is the set displayed by
one of the components of $T\del e$.  The {\em width} of $T$
is the maximum among the widths of its edges.
Finally, the {\em branch-width} of $M$ is the minimum among
the widths of all branch-decompositions of $M$.
For a matroid $M$ with $|M|\le 1$, the branch-width is defined
to be $|M|$.

The following result is a generalization of Corollary~\ref{bw}.
\begin{corollary}\label{bw2}
Let $\cF$ be a set of finite fields each of size at most $q$.
For each positive integer $k$,
if $M$ is an excluded minor for
the class of $\bF$-representable matroids and $M$ has branch-width
$k$, then $|M|\le \tower(3,q,q,k,6)$.
\end{corollary}

\begin{proof}
Consider a tree-decomposition $T$ of $M$ of 
width at most $k$. By Theorem~\ref{main2}, $M$ has no nested sequence 
of $k$-separations of length $\tower(q,q,k,q)$. Therefore $T$ has no 
Path of length $l=\tower(q,q,k,6) + 2k$.
The maximum number of leaves in a cubic tree with no 
path of length $l$ is
$$
= \left\{
\begin{array}{ll} 3\cdot 2^\frac{l-3}{2}, & l\mbox{ odd} \\
2\cdot 2^{\frac{l-2}{2}}, & l\mbox{ even}.
\end{array}
\right.
$$
In either case, the number of leaves is at most $\tower(3,q,q,k,6)$,
and hence $|M|\le \tower(3,q,q,k,6)$.
\end{proof}

\section*{Acknowledgement}
We thank the referees for their careful reading of this paper.

\section*{References}

\newcounter{refs}

\begin{list}{[\arabic{refs}]}%
{\usecounter{refs}\setlength{\leftmargin}{10mm}\setlength{\itemsep}{0mm}}

\item \label{gw}
J. Geelen, G. Whittle,  
Branch-width and Rota's Conjecture,
J. Combin. Theory Ser. B {\bf 86} (2002), 315-330.

\item \label{oxley}
J. G. Oxley,  {\em Matroid Theory}
Oxford University Press, New York, second edition (2011).

\item\label{rota}
G.-C. Rota,
Combinatorial theory, old and new.
In {\em Proc. Internat. Cong. Math.}
(Nice, Sept. 1970), pp. 229-233.
Gunthier-Villars, Paris.

\end{list}

\end{document}